\newcommand{\minors}{\operatorname{\nabla}}
\newcommand{\tminors}{\operatorname{\widetilde\nabla}}
\title{Nowhere dense graph classes,\\stability, and the independence property}
\author{Hans Adler and Isolde Adler}
\date{17th November 2010}
\keywords{nowhere dense, graph class, shallow minor, strongly stable theory, independence property}
\begin{document}
\maketitle
\abstract{A class of graphs is nowhere dense if for every integer $r$ there is
a finite upper bound on the size of cliques that occur as (topological)
$r$-minors.  We observe that this tameness notion from algorithmic graph theory
is essentially the earlier stability theoretic notion of superflatness.  For
subgraph-closed classes of graphs we prove equivalence to stability and to
not having the independence property.
}

\section{Introduction}

Recently, Nešetřil and Ossona de Mendez~\cite{nwd,nwdjsl} introduced \emph{nowhere dense} classes of finite graphs,
a generalisation of many natural and important classes such as
graphs of bounded degree, planar graphs, graphs excluding a fixed minor and graphs of bounded expansion.
These graph classes play an important role in algorithmic graph theory, as
many computational problems that are hard in general become tractable when restricted to such classes.
All these graph classes are nowhere dense.
Dawar and Kreutzer~\cite{DawarK09} gave efficient algorithms for domination problems on classes of 
nowhere dense graphs. Moreover, nowhere dense classes 
were studied in the area of finite model theory~\cite{Dawar07,Dawar10} under the guise  of
\emph{(uniformly) quasi-wide} classes and again 
turn out to be well-behaved.\footnote{The equivalence of nowhere dense, quasi-wide and uniformly quasi-wide
for subgraph-closed classes was proved by Nešetřil and Ossona de Mendez~\cite{nwdjsl}.}
We observe that nowhere density is essentially the 
stability theoretic notion of superflatness introduced by Podewski and Ziegler~\cite{sg} in 1978
because of its connection to stability.

For some time we have been looking for a way to translate between tameness in finite model theory and in
stability theory. A key obstacle was the fact that tameness notions in finite model theory are generally not
even invariant under taking the complement of a relation, whereas in stability theory the exact choice of
signature does not matter and all first-order definable sets are a priori equal.
It now appears that on graph classes such that every (not necessarily induced) subgraph
of a member is again in the class, tameness notions from stability theory,
finite model theory and also algorithmic graph theory can be compared in a meaningful though somewhat coarse way.
For subgraph-closed classes of graphs we show that nowhere density is equivalent to stability and to
dependence (not having the independence property). Equivalence of stability and dependence in this context
is somewhat surprising, although it is well known under the stability theoretic assumption of simplicity.

Stability and the independence property are two key dividing lines in Shelah's classification theory programme
for infinite model theory.~\cite{shelah90,math/9608205}
Stable theories do not have a formula that codes an infinite linear order. This strong and robust tameness property is the key
assumption on which Shelah originally built his monumental machinery of stability theory.
At the other end, theories with the independence property have a formula that can code every subset of some infinite set.
Stability theory has recently made advances into general theories without the independence property, but much remains to be done.
The independence property is a strong wildness property, although some theories with the independence property,
such as that of the random graph, are actually very easy to understand from a stability-theoretic point of view.
A formula has the independence property if and only if it has infinite Vapnik-Chervonenkis dimension --
a key wildness notion in computational learning theory.~\cite{vc,laskowski}

We hope for further translations between notions of tameness in stability theory
and notions of tameness in combinatorial graph theory. This should allow us to identify well-behaved graph classes
with good algorithmic properties.
Moreover, we hope that these translations can ultimately be refined and extended to more general contexts such as arbitrary classes
of relational structures.

\section{Shallow graph minors and nowhere density}

In this paper graphs are undirected, without loops or multiple edges, and not necessarily finite.
From the point of view of model theory, such a graph is a relational structure $G$
with an irreflexive and symmetric binary relation $E^G$.
For the standard notions of graph theory we refer the reader to Diestel's book~\cite{diestel}.

$H$ is a \emph{minor} of $G$ if there is a subgraph $U\subseteq G$ (not necessarily an induced subgraph) and an equivalence relation
$\epsilon$ on $U$ with connected classes, such that $H\cong U/\epsilon$, i.e.{} $H$ is the 
result of contracting each $\epsilon$-class
to a single vertex.
$H$ is an \emph{$r$-minor} of $G$ if each $\epsilon$-equivalence class contains a vertex from which
the other vertices have distance at most~$r$.

$H$ is a \emph{topological minor} of $G$ if there is a subgraph $U\subseteq G$ (not necessarily an induced subgraph)
and an equivalence relation $\epsilon$ on $E^U$, i.e.{} on the edges,
such that each $\epsilon$-class is a path whose interior vertices all have degree~2,
and $H\cong U/\epsilon$, i.e.{} $H$ is the result of contracting each $\epsilon$-class to a single edge.
$H$ is a \emph{topological $r$-minor} of $G$ if moreover each $\epsilon$-equivalence class consists of at most $2r+1$ edges.
In other words, up to isomorphism the vertices of a topological ($r$-)minor $H$ of $G$ form a subset of the vertices of $G$, and the edges
of $H$ correspond to pairwise internally vertex disjoint paths in $G$ (of length at most $2r+1$),
whose interior points avoid~$H$.

In the following, we will consider isomorphism-closed classes $\mathcal C$ of graphs. A (topological) ($r$-)minor of $\mathcal C$
is a (topological) ($r$-)minor of a member of $\mathcal C$, respectively.
We write $\mathcal C\minors r$ for the class of $r$-minors of $\mathcal C$, and 
$\mathcal C\tminors r$ for the class of topological $r$-minors of $\mathcal C$.
In particular, $\mathcal C\minors 0=\mathcal C\tminors 0$
is the class of all graphs isomorphic to a subgraph of a member of $\mathcal C$.
Also note $\mathcal C\tminors r\subseteq\mathcal C\minors r$,
$(\mathcal C\minors r)\minors s \subseteq \mathcal C\minors(r+s)$ and
$(\mathcal C\tminors r)\tminors s \subseteq \mathcal C\tminors(r+s)$.

Nešetřil and Ossona de Mendez proved that as $r$ goes to infinity, there are only three possible
asymptotic behaviours for the growth of the number of edges of finite $r$-minors,
or equivalently finite topological $r$-minors, in terms of their vertex counts:
finitely bounded, linear, or quadratical.
\begin{fact}
\[
\lim_{r\to\infty} \limsup_{\substack{H\in\mathcal C\minors r\\H \textup{finite}\\\left|H\right|\to\infty}} \frac{\log \left\| H \right\|}{\log \left| H \right|} \quad=\quad
\lim_{r\to\infty} \limsup_{\substack{H\in\mathcal C\tminors r\\H \textup{finite}\\\left|H\right|\to\infty}} \frac{\log \left\| H \right\|}{\log \left| H \right|}
\quad\in\quad \{0,1,2\},
\]
where $\left\|H\right\|$ and $\left|H\right|$ are the edge count and vertex count of $H$, respectively.
Moreover, the quadratic case (right-hand side~2) is equivalent to the statement that for some $r$ there is no finite upper bound
on the sizes of cliques that occur as $r$-minors of $\mathcal C$,
or equivalently as topological $r$-minors.
\end{fact}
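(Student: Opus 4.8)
The plan is to reduce the statement to a single combinatorial lemma. Write $d(r)$ (resp.\ $\widetilde d(r)$) for the $\limsup$ in the displayed formula, taken over finite $H\in\mathcal C\minors r$ (resp.\ $H\in\mathcal C\tminors r$). Since subgraphs of $r$-minors are again $r$-minors and every $r$-minor is an $(r+1)$-minor (and the same for topological $r$-minors), $d$ and $\widetilde d$ are non-decreasing; they are at most $2$ because $\|H\|\le\binom{|H|}{2}$, and $\mathcal C\tminors r\subseteq\mathcal C\minors r$ gives $\widetilde d\le d$. Hence $L:=\lim_rd(r)$ and $\widetilde L:=\lim_r\widetilde d(r)$ exist in $[0,2]$ with $\widetilde L\le L$, and if some $\mathcal C\minors r$ or $\mathcal C\tminors r$ contains $K_t$ for every $t$ then, since $\log\binom t2/\log t\to2$, the relevant limit equals $2$. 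It therefore suffices to prove: \emph{(i)} $L,\widetilde L\in\{0,1,2\}$; \emph{(ii)} if $L=2$ then $\mathcal C\tminors r$ contains every $K_t$, for some fixed $r$; \emph{(iii)} if $L=1$ then $\widetilde L=1$. Indeed (ii), (iii) and $\widetilde L\le L$ then force $\widetilde L=L$, and together with (i) and the easy clique direction this yields all the assertions.

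For \emph{(i)} I would first rule out the interval $(0,1)$: for every $r$, $d(r)\in\{0\}\cup[1,2]$, and similarly for $\widetilde d$. If $d(r)>0$, take finite $H_n\in\mathcal C\minors r$ with $|H_n|,\|H_n\|\to\infty$ and delete from each $H_n$ every component without an edge; the resulting $H^\ast_n\in\mathcal C\minors r$ has $\|H^\ast_n\|=\|H_n\|$ while $|H^\ast_n|\le2\|H^\ast_n\|$ (an edge-bearing component on $k$ vertices has $\ge k-1$ edges), so $|H^\ast_n|\to\infty$ and $\log\|H^\ast_n\|/\log|H^\ast_n|\to1$; taking suprema over $r$ gives $L,\widetilde L\in\{0\}\cup[1,2]$. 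For \emph{(iii)}: if $L=1$ then by the above some $d(r_1)=1$; pick finite $H_n\in\mathcal C\minors{r_1}$ realising this, pass to their edge-bearing cores $H^\ast_n$, so $|H^\ast_n|\to\infty$, $\|H^\ast_n\|\ge|H^\ast_n|/2$, and (the exponent $\log\|H^\ast_n\|/\log|H^\ast_n|$ lying between $1-o(1)$ and $d(r_1)=1$) $\|H^\ast_n\|=|H^\ast_n|^{1+o(1)}$; then \emph{prune} a witnessing $r_1$-minor embedding. Write $H^\ast_n=U/\epsilon$ with $U\subseteq G_n\in\mathcal C$ and connected $\epsilon$-classes $B_v$ of radius $\le r_1$, choose one connecting edge of $U$ per edge of $H^\ast_n$, replace each $B_v$ by the smallest subtree $B'_v$ of a depth-$\le r_1$ BFS tree of $B_v$ that contains the centre and every endpoint of a connecting edge lying in $B_v$, and let $U_n\subseteq G_n$ be the union of the $B'_v$ together with the connecting edges. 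Then $|U_n|\le|H^\ast_n|+r_1\sum_vu_v\le|H^\ast_n|+2r_1\|H^\ast_n\|$, where $u_v$ is the number of connecting edges meeting $B_v$, and $\|H^\ast_n\|\le\|U_n\|\le|U_n|+\|H^\ast_n\|$; with $\|H^\ast_n\|=|H^\ast_n|^{1+o(1)}$ this makes both $|U_n|$ and $\|U_n\|$ equal to $|H^\ast_n|^{1+o(1)}\to\infty$, so the density exponent of $U_n$ tends to $1$. As $U_n\in\mathcal C\minors 0=\mathcal C\tminors 0$ we get $\widetilde d(0)\ge1$, hence $\widetilde L=1$.

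The one nontrivial lemma is a \emph{density amplification at bounded depth}: there is a function $\rho$ such that every finite $H$ with $\|H\|\ge|H|^{1+\delta}$ contains $K_t$ as a topological $\rho(\delta)$-minor, with $t\ge|H|^{\delta/3}$ for $|H|$ large. For this I would apply the Komlós--Szemerédi theorem on topological clique minors in dense graphs, in a refined form (working inside a subgraph of minimum degree $\ge|H|^{\delta}$ and an expanding part of it) that yields a subdivision of $K_t$ with $t=\Omega(|H|^{\delta/2})$ all of whose branch paths have length $O(1/\delta)$ --- that is, a topological $O(1/\delta)$-minor. Granting the lemma, the rest of \emph{(i)} follows: if $L>1$ then $d(r)>1$ for some $r$, so $\|H\|\ge|H|^{1+\delta_0}$ for a fixed $\delta_0>0$ and arbitrarily large finite $H\in\mathcal C\minors r$, and amplification places $K_t$ with $t\to\infty$ in $(\mathcal C\minors r)\tminors{\rho(\delta_0)}\subseteq\mathcal C\minors(r+\rho(\delta_0))$, forcing $L=2$; the same argument with topological minors throughout excludes $\widetilde L\in(1,2)$. (For subgraph-closed $\mathcal C$, this amplification also captures the fact of Nešetřil and Ossona de Mendez~\cite{nwdjsl} that the nowhere dense/somewhere dense dichotomy is the same for minors and for topological minors.)

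The step I expect to be the genuine obstacle is \emph{(ii)}. When $L=2$, the amplification already provides a fixed $r_0$ with $K_t\in\mathcal C\minors{r_0}$ for all $t$, i.e.\ $\mathcal C\minors{r_0}$ is the class of all finite graphs; but passing from this to topological minors of $\mathcal C$ cannot be done by the pruning trick above, since a topological minor of a minor is only a minor and pruning yields subgraphs of members of $\mathcal C$ of merely \emph{linear} edge density --- too sparse to re-feed into the amplification. This is exactly where one needs the combinatorial argument of Nešetřil and Ossona de Mendez~\cite{nwd,nwdjsl} that a somewhere dense class realises every finite graph as a topological $r$-minor for one fixed $r$ --- routing the required internally disjoint short paths inside the witnessing graphs directly, using that density forces many short connections --- which I would take over wholesale. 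With (i)--(iii) established, $L=\widetilde L\in\{0,1,2\}$ and both clique characterisations follow.
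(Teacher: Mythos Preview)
The paper does not prove this statement at all: it is recorded as a \emph{Fact} due to Ne\v set\v ril and Ossona de Mendez and simply cited from~\cite{nwd,nwdjsl}. So there is no ``paper's own proof'' to compare against; your sketch is already more than the paper offers.

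As a roadmap your outline is sound. The monotonicity and $[0,2]$ bounds are routine; your elimination of the interval $(0,1)$ via stripping edgeless components is correct (one should pass to a subsequence so that the exponent actually converges to~$1$, but that is harmless); and the pruning argument for (iii), replacing each branch set by a minimal radius-$r_1$ subtree, does produce subgraphs $U_n\in\mathcal C\tminors0$ with vertex and edge counts both $|H^\ast_n|^{1+o(1)}$, giving $\widetilde d(0)\ge1$ as claimed. The reduction of everything to your density amplification lemma and to (ii) is the right architecture.

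Two remarks on the substantive steps. First, the amplification lemma as you state it---a topological $K_t$, $t\ge|H|^{\delta/3}$, with branch paths of length $O(1/\delta)$ inside any $H$ with $\|H\|\ge|H|^{1+\delta}$---is the heart of the matter, but it is not really Koml\'os--Szemer\'edi: their subdivisions have path lengths growing with $|H|$, whereas you need depth bounded by a function of $\delta$ alone. The bounded-depth version is precisely what Ne\v set\v ril and Ossona de Mendez establish (via iterated shallow minors rather than a one-shot expander argument), so your citation for the lemma should really be~\cite{nwd,nwdjsl} rather than Koml\'os--Szemer\'edi. Second, you are right that (ii) is the genuine obstacle and cannot be recovered from pruning plus amplification; deferring to~\cite{nwd,nwdjsl} there is appropriate, but it means your sketch is ultimately a reorganisation of their proof rather than an independent one. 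Given that the paper itself merely cites the result, that is entirely adequate.
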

They called $\mathcal C$ \emph{nowhere dense} in the linearly bounded case, i.e.\ when for every $r$
there is a finite upper bound for the sizes of cliques that occur as $r$-minors
(or, equivalently, topological $r$-minors) of $\mathcal C$.~\cite{nwd}
If $\mathcal C$ is nowhere dense, then so is every subclass of every class of the form
$\mathcal C\minors r$.\footnote{Ne\v set\v ril and Ossona de Mendez only consider
classes of finite graphs, but the definitions and results carry over in a straightforward way to classes of
not necessarily finite graphs.}

An \emph{$m$-clique} is a complete graph on $m$ vertices, denoted by $K_m$. By
$K_m^r$ we denote the result of subdividing each edge of the $m$-clique $K_m$ exactly $r$~times.
Essentially following Podewski and Ziegler~\cite{sg}, we call $\mathcal C$ \emph{superflat} if for every $r$ there is an $m$
such that $K_m^r$ does not occur as a subgraph of a member of $\mathcal C$.
Using the finite Ramsey theorem, it is easy to see:

\begin{remark}\label{remark:nwdsuperflat}
Let $\mathcal C$ be a class of graphs.
$\mathcal C$ is nowhere dense if and only if $\mathcal C$ is superflat.
\end{remark}

\section{Stability of graphs}

Graphs and digraphs are examples of relational structures in the sense of first-order logic and model theory.
Since we will later treat coloured digraphs in this framework, it is worth introducing some of the terminology
in its general form. 
A \emph{relational signature} is a set $\sigma$ of relation symbols. Every relation symbol
$R\in\sigma$ has an associated non-negative integer $\textup{ar}(R)$, its \emph{arity}.
A \emph{$\sigma$-structure} $M$ consists of a set $U$ called the \emph{universe} or \emph{underlying set}, and a
relation $R^M \subseteq U^{\textup{ar}(R)}$ for every $R\in\sigma$.\footnote{Nullary relation symbols act
syntactically and semantically like the variables of propositional logic, encoding Boolean variables within structures.
Every nullary relation is a subset of the 1-element set which has the 0-tuple as its only element.
Such a relation is true if and only if it is non-empty.
Some authors exclude nullary relation symbols from the definition, but they may turn out useful in our context.}
For the standard notions of model theory see the book by Hodges~\cite{hodges}.
An undirected graph is an  $\{E\}$-structure $G$ with a single, binary relation $E^G$ 
that is irreflexive and symmetric.

The formulas of \emph{first order logic} are built in the usual way from 
variables $x,y,z,x_1,\ldots$, the equality symbol $=$,
the relation symbols in $\sigma$, the Boolean connectives $\wedge, \vee, \neg,
\rightarrow$, and the quantifiers $\forall,\exists$ ranging over the universe
of the structure. A \emph{free} variable in a first-order formula is a variable
$x$ that is not within the scope of a quantifier $\forall x$ or $\exists x$.
The notation $\phi(x_1,\ldots,x_n)$ indicates that all free variable of the formula $\phi$
are among $x_1,\ldots ,x_n$. For a formula $\phi(x_1,\ldots,x_n)$, a structure
$M$ and elements $a_1,\ldots,a_n$ of the universe of $M$ we write 
$M\models\phi(a_1,\ldots,a_n)$ to say that $M$ satisfies $\phi$ if the
variables $x_1,\ldots ,x_n$ are interpreted by the elements $a_1,\ldots,a_n$, respectively.

Let $\mathcal C$ be a class of structures of a fixed signature.
A first-order formula $\phi(\bar x,\bar y)$ is said to have the \emph{order property} with respect to~$\mathcal C$
if it has the \emph{$n$-order property} for all $n$, i.e.\ if for every $n$
there exist a structure $M\in\mathcal C$ and tuples $\bar a_0,\ldots,\bar a_{n-1},\bar b_0,\ldots,\bar b_{n-1}\in M$
such that $M\models\phi(\bar a_i,\bar b_j)$ holds if and only if $i<j$.
A class $\mathcal C$ of structures is called \emph{stable} if there is no such formula with respect to~$\mathcal C$.
It is easy to see that $\mathcal C$ is stable if and only if there is no
formula $\psi(\bar u, \bar v)$ with $|\bar u|=|\bar v|$, such that for every $n$
there exist a structure $M\in\mathcal C$ and tuples $\bar c_0,\ldots,\bar c_{n-1}\in M$ such that
$M\models\psi(\bar c_i,\bar c_j)$ holds if and only if $i\leq j$, i.e.{} $\psi$ orders the tuples linearly.

Stability and the ($n$-)order property come from stability theory~\cite{shelah90,math/9608205},
where they are defined for the class of models of a complete first-order theory.  
A single structure $M$ is called stable if $\{M\}$ is stable.
This is equivalent to requiring that the class of all structures elementarily
equivalent to $M$ be stable, and so our notion of stability generalises the usual one.
In this paper we are primarily interested in applying the concept to
classes of finite graphs.

An \emph{interpretation} $I$ of a class $\mathcal C$ of structures of a fixed relational
signature in a class $\mathcal D$ of structures of another fixed signature
is given by the following data.
We have a first-order formula $\delta_I(\bar y)$ in the signature of $\mathcal D$
and for each structure $A\in\mathcal C$ a structure $I(A)\in\mathcal D$.
For $B\in\mathcal D$ let $\delta_I^B$ denote
the set of tuples $\bar b\in B$ for which $B\models\delta_I(\bar b)$ holds.
For each $A\in\mathcal C$ there is a surjective map $s_I^A\colon\delta_I^{I(A)}\to A$.
For each first-order formula $\phi(x_0,\ldots, x_{n-1})$ in the signature of $\mathcal C$
there is a first-order formula $\phi_I(\bar y_0,\ldots,\bar y_{n-1})$ in
the language of $\mathcal D$, not depending on $A$, such that for all $\bar b_0,\ldots,\bar b_{n-1}\in I(A)$
we have $A\models\phi(s_I^A(\bar b_0),\ldots,s_I^A(\bar b_{n-1}))$ if and only if
$I(A)\models\phi_I(\bar b_0,\ldots,\bar b_{n-1})$.
It is enough to find formulas $\phi_I$ for atomic formulas $\phi$.

In this paper we will only construct interpretations in which $\bar y$ has length~1 and the maps $s_I^A$ are bijective.

\begin{remark}
If $\mathcal C$ is interpretable in $\mathcal D$ and $\mathcal D$ is stable,
then so is $\mathcal C$.
\end{remark}

The notion of superflatness was originally introduced by Podewski and Ziegler as a simple sufficient
condition for stability of infinite graphs. A graph $G$ is \emph{superflat} in their sense if and only if
$\{G\}$ is superflat.

\begin{fact}[Podewski, Ziegler \cite{sg}]\label{fact:superflat}
Every superflat graph $G$ is stable.
\end{fact}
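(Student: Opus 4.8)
The plan is to prove the contrapositive: if $\{G\}$ is unstable then $G$ is not superflat. Two transfer observations make this reduction possible. If $G$ is superflat then for each $r$ there is an $m_r$ with $K_{m_r}^r\not\subseteq G$, which is a universal first-order sentence and so holds in every $G^*\equiv G$; conversely, containment of a fixed finite graph as a subgraph is an existential first-order sentence and transfers as well. Since ``$\phi$ has the $n$-order property'' is first-order for each fixed $n$, $\{G\}$ is unstable iff $\mathrm{Th}(G)$ is unstable, witnessed by the same formula in every model. It therefore suffices to show: if some $\psi(\bar u,\bar v)$ has the order property with respect to $\{G\}$, then for one fixed $r$ and every $m$ a copy of $K_m^r$ is a subgraph of some $G^*\models\mathrm{Th}(G)$; we work throughout in a sufficiently saturated such $G^*$.

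The first step is to replace $\psi$ by a \emph{local} witness. By Gaifman's locality theorem $\psi$ is logically equivalent to a Boolean combination of basic local sentences and of $t$-local formulas $\xi_1(\bar u\bar v),\dots,\xi_q(\bar u\bar v)$ for some radius $t$; in $G^*$ the basic local sentences have fixed truth values, so over $G^*$ we have $\psi\equiv F(\xi_1,\dots,\xi_q)$ for a Boolean function $F$. The formulas \emph{without} the order property with respect to $\{G\}$ (for a fixed partition of the variables) are closed under Boolean combinations --- for a conjunction this is a short Ramsey argument, colouring a pair $\{i,j\}$ by which conjunct fails on the side where it should fail --- so some $\xi:=\xi_s$ has the order property and is $t$-local, i.e.\ the truth of $\xi(\bar a,\bar b)$ depends only on the isomorphism type of the induced subgraph carried by $B_t(\bar a)\cup B_t(\bar b)$ with $\bar a,\bar b$ marked. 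Concatenating the witnessing tuples and extracting an indiscernible sequence by Ramsey and compactness yields $(\bar c_i)_{i\in\mathbb Q}$ in $G^*$ and a $t$-local formula $\zeta$ with $\zeta(\bar c_i,\bar c_j)$ true exactly when $i<j$.

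By indiscernibility the marked induced subgraph carried by $B_t(\bar c_i)\cup B_t(\bar c_j)$ has one fixed isomorphism type for all $i<j$. If in this type $B_t(\bar c_i)$ and $B_t(\bar c_j)$ were disjoint with no edge between them, this structure would be the disjoint union of the (likewise fixed) local types of $\bar c_i$ and of $\bar c_j$, so $\zeta(\bar c_i,\bar c_j)$ would not depend on the pair, contradicting that it orders. Hence $\mathrm{dist}(\bar c_i,\bar c_j)\le 2t+1$ for all $i\ne j$, and (after a Ramsey step fixing which coordinates are nearest and the distance realised) a path of fixed length between nearest coordinates of $\bar c_i$ and $\bar c_j$ lies inside $B_t(\bar c_i)\cup B_t(\bar c_j)$; we may also arrange that each ball $B_t(\bar c_i)$ is connected.

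It remains to assemble arbitrarily large subdivided cliques from this ordered, uniformly entangled configuration. Let $Y:=\bigcap_i B_t(\bar c_i)$ be the generic core; by a sunflower-type argument we may assume $B_t(\bar c_i)\cap B_t(\bar c_j)=Y$ for all $i\ne j$. If for all $i<j$ there is an edge of $G$ between $B_t(\bar c_i)\setminus Y$ and $B_t(\bar c_j)\setminus Y$, then the balls are independent over $Y$; one routes, inside $B_t(\bar c_i)\cup B_t(\bar c_j)$ and off $Y$, pairwise internally disjoint short paths through these edges, and together with a representative vertex of each $\bar c_i$, and after one final thinning, these witness $K_m^r\subseteq G^*$ for a fixed $r$ depending only on $t$ and every $m$, contradicting superflatness. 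Otherwise every connection between $\bar c_i$ and $\bar c_j$ passes through the fixed core $Y$; one names the vertices of $Y$ and runs the argument again with tuples of strictly smaller mutual overlap, a recursion that must terminate --- again producing large subdivided cliques, or else showing by the symmetry of the configuration around $Y$ that $\zeta$ could not have ordered the $\bar c_i$. I expect this last step --- making the sunflower reduction precise, handling balls that may be infinite, and controlling the recursion on cores --- to be the main obstacle; the passage to a local, indiscernible configuration is routine locality-and-Ramsey bookkeeping.
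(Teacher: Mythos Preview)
The paper does not prove this statement: it is quoted as a fact from Podewski and Ziegler~\cite{sg} and used as a black box (in the proof of Lemma~\ref{lemma:superflat}). There is therefore no argument in the paper to compare your proposal against.

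Judged on its own, your outline follows a natural modern strategy --- pass to a saturated $G^*\equiv G$, replace the ordering formula by a $t$-local one via Gaifman's theorem and Boolean closure of stable formulas, extract an order-indiscernible sequence, and use locality to force $\mathrm{dist}(\bar c_i,\bar c_j)\le 2t+1$. These steps are correct. The final step, however, has a genuine gap that is not just missing bookkeeping. Your sunflower reduction presupposes finite petals, but $B_t(\bar c_i)$ can be infinite even in a superflat graph (an infinite star is superflat and has an infinite $1$-ball), so the $\Delta$-system lemma does not apply and the ``core'' $Y$ need not exist in any useful sense. Consequently ``naming the vertices of $Y$ and running the argument again'' has no well-founded quantity to decrease, and the alternative termination clause --- that symmetry around $Y$ would prevent $\zeta$ from ordering --- is unjustified: $\zeta$ can order the $\bar c_i$ through an asymmetric interaction with a fixed set of named vertices without producing any subdivided clique off that set. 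What is actually needed is an argument that, from pairwise-close indiscernible tuples, manufactures \emph{internally disjoint} short paths (or reduces to a configuration of strictly smaller local radius); this is the substantive combinatorics, and it does not follow from the sunflower picture you sketch. The original Podewski--Ziegler proof, incidentally, predates Gaifman's theorem and proceeds by a direct analysis of types rather than through locality.
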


Every subgraph of a superflat graph is superflat, but every graph is a subgraph of a stable graph (a clique).
Therefore the converse of Fact~\ref{fact:superflat} does not hold.

\begin{lemma}\label{lemma:superflat}
Let $\mathcal C$ be a class of graphs.
If $\mathcal C$ is superflat, then $\mathcal C$ is stable.
\end{lemma}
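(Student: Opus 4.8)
The plan is to reduce this class-level statement to its single-graph analogue, Fact~\ref{fact:superflat}, by a compactness argument (an ultraproduct would do equally well). Suppose for contradiction that $\mathcal C$ is superflat but not stable. Then some first-order formula $\phi(\bar x,\bar y)$ has the $n$-order property with respect to $\mathcal C$ for every $n$; for each $n$ fix a graph $M_n\in\mathcal C$ together with tuples $\bar a_0^n,\dots,\bar a_{n-1}^n,\bar b_0^n,\dots,\bar b_{n-1}^n$ in $M_n$ witnessing it. The aim is to produce a single graph that is simultaneously superflat and unstable, contradicting Fact~\ref{fact:superflat}.

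The point that makes this work is that both of the obstructions in play are first-order. For fixed $m$ and $r$, the statement ``$G$ has no subgraph isomorphic to $K_m^r$'' is a universal sentence $\sigma_{m,r}$: existentially quantify over $|K_m^r|$ vertices, demand that they are distinct and that the edges of $K_m^r$ occur among them, and negate. For fixed $k$, the statement ``$\phi$ has the $k$-order property'' is an existential sentence $\tau_k$: existentially quantify over tuples $\bar x_0,\dots,\bar x_{k-1},\bar y_0,\dots,\bar y_{k-1}$ and assert $\phi(\bar x_i,\bar y_j)\leftrightarrow i<j$ for all $i,j<k$. Two small observations will do the rest: (i) the definition of a superflat \emph{class} provides, for each $r$, a \emph{single} bound $m(r)$ with $M\models\sigma_{m(r),r}$ for every $M\in\mathcal C$; and (ii) restricting the witnessing tuples to their first $k$ coordinates shows $M_n\models\tau_k$ whenever $k\le n$.

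Let $T$ be the first-order theory consisting of the two sentences saying that the edge relation is irreflexive and symmetric, together with all the sentences $\sigma_{m(r),r}$ and all the sentences $\tau_k$. A finite subset of $T$ mentions only finitely many parameters $r$ and $k$; if $k^\ast$ is larger than all the latter, then $M_{k^\ast}\in\mathcal C$ satisfies the graph axioms, every $\sigma_{m(r),r}$ (being a member of $\mathcal C$), and every $\tau_k$ that appears (by (ii)). So $T$ is finitely satisfiable and hence, by compactness, has a model $G^\ast$ --- alternatively one takes $G^\ast=\prod_n M_n/\mathcal U$ for a nonprincipal ultrafilter $\mathcal U$ on the set of indices and applies {\L}o\'s's theorem, noting via (i) and (ii) that each $\sigma_{m(r),r}$ holds in all of the $M_n$ and each $\tau_k$ holds in cofinitely many. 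Now $G^\ast$ is a graph; it satisfies $\sigma_{m(r),r}$ for every $r$ and is therefore superflat; and it satisfies $\tau_k$ for every $k$, so $\phi$ witnesses that $\{G^\ast\}$ is unstable. This contradicts Fact~\ref{fact:superflat}, so $\mathcal C$ is stable after all.

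I do not expect a genuinely hard step here. The real content is simply the observation that ``containing the subdivided clique $K_m^r$'' and ``realising a linear order of length $k$'' are finitary conditions, so that a compactness limit (or ultraproduct) of members of $\mathcal C$ stays superflat --- this is exactly where the \emph{class}-uniform bound $m=m(r)$ is used --- while still carrying the order property. The one thing that needs a moment's care is observation (ii), that the $n$-order property entails the $k$-order property for $k\le n$; it is what lets an arbitrary nonprincipal ultrafilter (or plain compactness) suffice, without having to choose the ultrafilter to cohere with the $M_n$.
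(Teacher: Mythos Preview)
Your argument is correct, and it is a genuinely different route from the paper's. You pass to a single superflat graph via compactness (equivalently an ultraproduct of witnesses $M_n$), using that the class-level definition of superflatness gives a \emph{uniform} bound $m(r)$ and that both ``no $K_{m(r)}^r$ subgraph'' and ``$\phi$ has the $k$-order property'' are first-order sentences. The paper instead builds a single graph explicitly: it first replaces each $A\in\mathcal C$ by a bounded-diameter graph $A'$ (adding a definable apex vertex $t$ adjacent to everything, marked by a small clique), takes the disjoint union $G(\mathcal C')$, observes that this is still superflat, and then translates formulas from $\mathcal C$ into $G(\mathcal C')$ using the definable apex to recover the individual pieces. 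Your approach is cleaner and needs no gadgetry; the paper's construction is more hands-on and sets up an interpretation-flavoured reduction that it can recycle in the later sections (coloured digraphs, the strongly stable case). Both reduce the work to Fact~\ref{fact:superflat} in the same way.
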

\begin{proof}
The basic idea is simply to put all members of $\mathcal C$ together into
a single graph $G(\mathcal C)$ whose vertex and edge sets are the disjoint unions of the vertex and
edge sets of the members of~$\mathcal C$.
If $\mathcal C$ is superflat, then so is $G(\mathcal C)$, which is therefore a stable graph
by Fact~\ref{fact:superflat}.
Any quantifier-free formula $\phi(\bar x, \bar y)$ can be read in $\mathcal C$ or in $G(\mathcal C)$.
It is easy to see that if $\phi(\bar x,\bar y)$ has the order property with respect to $\mathcal C$,
then the same is true with respect to $G(\mathcal C)$.
But unless all graphs in $\mathcal C$ are connected of bounded diameter, $\mathcal C$ is not interpretable
in $G(\mathcal C)$ because there is no suitable formula $\delta_I$.

From every graph $A\in\mathcal C$ we derive a graph $A'$ of diameter at most four, as follows.
The vertices of $A'$ are the vertices of $A$ together with one new vertex each for every edge
of $A$, as well as three additional vertices $t, t_1, t_2$. For every edge $(a,b)\in E^A$ of~$A$,
$A'$ contains the edges $(a,e),(e,b)\in E^{A'}$, where $e=\{a,b\}$. The remaining edges of $A'$ are
$(t,t_1), (t,t_2), (t_1,t_2)$ as well as one edge each connecting $t$
with every vertex of $A$.
Note that (assuming for simplicity that $A$ is non-empty) $t\in A'$ is definable by a first-order
formula, as the only vertex of degree $>2$ that is part of a 3-cycle. Hence the vertex set of
$A$ is also definable, as the set of vertices that are adjacent to $t$ but are not part of
a 3-cycle.
Transforming every graph $A\in\mathcal C$ in this way, we get a new class $\mathcal C'$.
It is easy to see that $\mathcal C'$ is again superflat, and so is $G(\mathcal C')$.
Hence $G(\mathcal C')$ is stable by Fact~\ref{fact:superflat}.

In $G(\mathcal C')$, we can again identify the vertices of type $t$ and the vertices corresponding
directly to vertices in a graph in $\mathcal C$ in first order. On the latter set of vertices,
the relation given by $a$ and $b$ being both connected to a point of type $t$ is an equivalence
relation, again definable in first order. This allows us to translate every first-order formula
$\phi(\bar x,\bar y)$ into a formula $\phi'(\bar x,\bar y)$ such that for $\bar a,\bar b\in A\in\mathcal C$,
$A\models\phi(\bar a,\bar b)$ if and only if $G(\mathcal C')\models\phi'(\bar a,\bar b)$.

Since $G(\mathcal C')$ is stable, $\phi'(\bar x,\bar y)$ does not have the order property
with respect to $G(\mathcal C')$, hence $\phi(\bar x, \bar y)$ does not have the order property
with respect to $\mathcal C$. Since this holds for arbitrary $\phi$, $\mathcal C$ is stable.
\end{proof}

\section{Stability of coloured digraphs}

In this section we extend Lemma~\ref{lemma:superflat} to classes of vertex- and edge-coloured directed graphs.
More precisely, we extend it to relational structures where all relation symbols are at most binary.
We call such relational structures \emph{coloured digraphs}.

By a \emph{coloured digraph} we will understand a relational structure whose relation symbols are at most binary.
The \emph{underlying graph} or \emph{Gaifman graph} $\underline M$ of a relational structure $M$ is the graph with vertices the
elements of $M$ and edges all pairs $(a,b)$ such that $a\not=b$ and $a$ and $b$ appear in an instance of a relation of $M$ together.
(I.e. $(a,b)\in E^{\underline M}$ if and only if $a\not=b$ and there exist a relation symbol $R$ and $c_1\ldots c_n\in R^M$
such that $a=c_i$, $b=c_j$ for some $i,j$.) For every class $\mathcal C$ of structures $M$ we let $\underline{\mathcal C}$
be the class of underlying graphs $\underline M$.
For a class of stuctures $\mathcal C$, 
the combinatorial complexity of the graphs in $\underline{\mathcal C}$ is a good 
indication for the computational complexity of algorithmic problems on $\mathcal C$.
This has been exploited in various areas such as complexity theory, database theory,
algorithmic graph theory
and finite model theory. Here we will only consider
underlying graphs of coloured digraphs, in which case 
the construction amounts to forgetting the colours, loops, and edge directions.

\begin{lemma}\label{lemma:graphdigraph}
Every class $\mathcal C$ of coloured digraphs of a fixed countable signature can be interpreted in a class $\mathcal C'$
of undirected graphs such that $\mathcal C'$ is superflat if and only if $\underline{\mathcal C}$ is superflat.
\end{lemma}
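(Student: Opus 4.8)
The plan is to attach to the Gaifman graph $\underline A$ of each coloured digraph $A\in\mathcal C$ a rigid scaffolding of bounded‑degree gadgets that codes all the relational information of $A$ in first order while leaving $A'=I(A)$ no denser than $\underline A$. Concretely I would build $A'$ as follows. Take $\underline A$ and subdivide every edge $\{a,b\}$ twice, calling the new vertices $x^1_{ab}$ (adjacent to $a$) and $x^2_{ab}$ (adjacent to $b$); the asymmetry between them records the direction of binary relations. Enumerate the relation symbols as $R_0,R_1,\dots$ --- the only place where countability of the signature is used --- and attach pendant ``anchor'' trees: for each edge $\{a,b\}$ of $\underline A$ a vertex $w_{ab}$ pendant on $x^1_{ab}$ carrying, for each binary $R_j$ with $(a,b)\in R_j^A$, a pendant path of length $10+2j$, and a vertex $w_{ba}$ pendant on $x^2_{ab}$ treated symmetrically for the pairs $(b,a)$; for each element $a$ of $A$ a vertex $c_a$ pendant on $a$ carrying a pendant path of length $10+2j$ for each unary $R_j$ with $a\in R_j^A$ and of length $11+2j$ for each binary $R_j$ with $(a,a)\in R_j^A$; and one global vertex $c_*$ coding analogously the nullary relations holding in $A$. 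Finally, so that all these roles are first‑order definable, add one vertex $t$ adjacent to exactly the elements of $A$ and a triangle on $\{t,t_1,t_2\}$, in the manner of the proof of Lemma~\ref{lemma:superflat}, and put small distinctive pendant trees (``flags'') on the anchor vertices so that a formula can tell $w$‑anchors, $c$‑anchors and $c_*$ apart. Let $\mathcal C'$ be the isomorphism closure of $\{A':A\in\mathcal C\}$.

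For the interpretation, $t$ is the unique vertex of degree $>2$ on a $3$‑cycle --- the only $3$‑cycle of $A'$ --- so $\delta_I(y):=$``$y$ is adjacent to $t$ and lies on no $3$‑cycle'' defines the elements of $A$, and $s_I^A$ is the identity on them. Since it suffices to translate atomic formulas, it remains to note that $x_0=x_1$ translates to $y_0=y_1$; that $R_j(x_0)$ translates to ``$c_{y_0}$ has a pendant path of length exactly $10+2j$'' and the loop $R_j(x_0,x_0)$ to the same with $11+2j$; that $R_j(x_0,x_1)$ for binary $R_j$ translates to ``the anchor pendant on the neighbour of $y_0$ on the subdivided edge from $y_0$ to $y_1$ has a pendant path of length $10+2j$''; and that a nullary $R_j$ translates to the corresponding statement about $c_*$. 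Because $c_{y_0}$, the subdivision vertices and $t$ are each recognisable from the rigid pattern, all of this is routine first‑order bookkeeping, independent of $A$.

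Now I compare superflatness, using that superflatness of a class is equivalent, by Remark~\ref{remark:nwdsuperflat} and the definition of nowhere density, to having for every $s$ a bound on the size of cliques occurring as topological $s$‑minors. The subgraph of $A'$ on the elements of $A$ together with the paths $a\,x^1_{ab}\,x^2_{ab}\,b$ is a subdivision of $\underline A$, so $\underline A$ is a topological $2$‑minor of $A'$; hence $\underline{\mathcal C}\tminors r$ lies up to isomorphism inside $(\mathcal C'\tminors 2)\tminors r\subseteq\mathcal C'\tminors(r+2)$, and superflatness of $\mathcal C'$ passes to $\underline{\mathcal C}$. For the converse, suppose $\underline{\mathcal C}$ is superflat, fix $r$, and let $H\subseteq A'$ with $H\cong K_m^r$ and $m\ge5$. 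Delete the vertex $t$ from $H$ if it occurs there; since this removes at most one branch vertex (or severs one of its paths), $A'-t$ still contains a copy of $K_{m-1}^r$. But the nontrivial blocks of $A'-t$ are exactly the twice‑subdivided nontrivial blocks of $\underline A$, so this $K_{m-1}^r$, being $2$‑connected, lies in one such block $B$ coming from a block $B_0$ of $\underline A$; and since $m-1\ge4$ its branch vertices have degree at least $3$ in $B$ and so are elements of $A$ in $B_0$ (the subdivision vertices having degree $2$ there). Along each of the $\binom{m-1}2$ internally disjoint branch paths, consecutive elements of $A$ are three apart, joined by a subdivided edge of $\underline A$, so contracting the subdivision vertices turns the branch path into a path of $\underline A$ of length at most $\lceil(r+1)/3\rceil$; these paths stay internally disjoint because an element of $A$ interior to a branch path has $H$‑degree $2$. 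Thus $K_{m-1}$ is a topological $\lceil(r+1)/3\rceil$‑minor of $\underline A$, which for $m$ large enough contradicts superflatness of $\underline{\mathcal C}$. Hence for every $r$ some $m$ has $K_m^r\notin\mathcal C'\minors0$, i.e.\ $\mathcal C'$ is superflat.

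The main obstacle --- and what dictates the construction --- is the tension already present in Lemma~\ref{lemma:superflat}: first‑order definability of $A$ inside $A'$ invites distinctive high‑degree or cyclic gadgets, whereas keeping $A'$ as sparse as $\underline A$ forbids exactly these. The way out is to confine all coding to pendant trees plus the single flagged vertex $t$, and the real content is then the block analysis above: it shows that the only cycles of $A'$ surviving in its nontrivial blocks are harmlessly subdivided cycles of $\underline A$, so that any large subdivided clique in $A'$ must already be present, essentially unchanged, in $\underline A$.
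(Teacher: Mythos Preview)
Your construction and argument are correct, but they differ from the paper's in an instructive way.

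The paper marks the original vertices of $G$ \emph{locally}: to each vertex $a$ it attaches a private $4$-clique, so that the vertices of $G$ are recognisable as the unique neighbours of $4$-cliques. Nullary data are carried by a separate small component containing the unique chordless $4$-cycle. Everything added is of bounded size and bounded degree, so the superflatness equivalence really is immediate (the paper simply says ``easy to see''): any $K_m^r$ in $G'$ with $m$ large and $r\ge1$ cannot live inside the constant-size gadgets and must therefore trace out, after undoing the length-$3$ subdivisions, a $K_m^{r'}$ in $\underline G$.

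You instead borrow the global apex $t$ from the proof of Lemma~\ref{lemma:superflat}. This makes the interpretation very clean --- a single definable vertex picks out the domain --- but introduces a vertex of unbounded degree, which is exactly what forces your extra work: you must delete $t$, sacrifice one branch vertex, and then run a genuine block-decomposition argument to push the remaining $K_{m-1}^r$ down into a twice-subdivided block of $\underline A$. That argument is sound (and more carefully written than the paper's one-line dismissal), including the observation that branch vertices of degree $\ge3$ cannot sit at degree-$2$ subdivision points inside the block; note incidentally that this forces $3\mid r+1$, so for other $r$ the case is vacuous.

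In short: the paper trades a slightly heavier gadget (many $4$-cliques) for a trivial superflatness check; you trade a lighter marking scheme for a nontrivial combinatorial argument. Both work.
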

\begin{proof}
We enumerate the binary relation symbols in the signature of $\mathcal C$ as $R_1$, $R_2$, $R_3$,~$\ldots$,
the unary relation symbols as $P_1$, $P_2$, $P_3$,~$\ldots$ and the nullary relation symbols as $A_1$, $A_2$, $A_3$,~$\ldots$.
For a single coloured digraph~$G$, we define the following graph~$G'$.

\setlength{\unitlength}{0.9mm}
\begin{figure}[bh]
\begin{picture}(70,80)(+28,5)
\newsavebox{\spot}
\savebox{\spot}(1,1)[bl]{\circle*{1.5}}
\put(30,30){\usebox{\spot}}
\put(30,60){\usebox{\spot}}
\put(60,30){\usebox{\spot}}
\put(60,60){\usebox{\spot}}
\put(90,30){\usebox{\spot}}
\put(30,30){\line(0,1){30}}
\put(30,30){\line(1,0){60}}
\put(30,60){\line(1,0){30}}
\put(60,30){\line(0,1){30}}
\put(60,30){\circle{4}}
\put(60,60){\circle{4}}
\put(30,60){\line(-1,-2){3}}
\put(30,60){\line(1,-2){3}}
\put(60,60){\line(-1,-2){3}}
\put(60,60){\line(1,-2){3}}
\put(60,30){\line(-1,2){3}}
\put(60,30){\line(1,2){3}}
\put(60,30){\line(-2,-1){6}}
\put(60,30){\line(-2,1){6}}
\put(60,60){\line(-2,-1){6}}
\put(60,60){\line(-2,1){6}}
\put(90,30){\line(-2,-1){6}}
\put(90,30){\line(-2,1){6}}
\end{picture}
\hspace{\stretch{1}}
\begin{picture}(90,80)(+3,5)
\savebox{\spot}(1,1)[bl]{\circle*{1.5}}
\put(30,30){\usebox{\spot}}
\put(30,60){\usebox{\spot}}
\put(60,30){\usebox{\spot}}
\put(60,60){\usebox{\spot}}
\put(90,30){\usebox{\spot}}
\put(30,30){\line(0,1){30}}
\put(30,30){\line(1,0){60}}
\put(30,60){\line(1,0){30}}
\put(60,30){\line(0,1){30}}
\put(30,40){\usebox{\spot}}
\put(30,50){\usebox{\spot}}
\put(60,40){\usebox{\spot}}
\put(60,50){\usebox{\spot}}
\put(30,40){\usebox{\spot}}
\put(40,30){\usebox{\spot}}
\put(50,30){\usebox{\spot}}
\put(40,60){\usebox{\spot}}
\put(50,60){\usebox{\spot}}
\put(70,30){\usebox{\spot}}
\put(80,30){\usebox{\spot}}
\put(30,60){\line(-1,1){20}}
\put(20,70){\line(0,1){10}}
\put(10,70){\line(0,1){10}}
\put(20,70){\line(-1,0){10}}
\put(20,80){\line(-1,0){10}}
\put(10,70){\line(1,1){10}}
\put(20,70){\usebox{\spot}}
\put(10,70){\usebox{\spot}}
\put(20,80){\usebox{\spot}}
\put(10,80){\usebox{\spot}}
\put(60,60){\line(-1,1){20}}
\put(50,70){\line(0,1){10}}
\put(40,70){\line(0,1){10}}
\put(50,70){\line(-1,0){10}}
\put(50,80){\line(-1,0){10}}
\put(40,70){\line(1,1){10}}
\put(50,70){\usebox{\spot}}
\put(40,70){\usebox{\spot}}
\put(50,80){\usebox{\spot}}
\put(40,80){\usebox{\spot}}
\put(30,30){\line(-1,-1){20}}
\put(20,20){\line(0,-1){10}}
\put(10,20){\line(0,-1){10}}
\put(20,20){\line(-1,0){10}}
\put(20,10){\line(-1,0){10}}
\put(10,20){\line(1,-1){10}}
\put(10,20){\usebox{\spot}}
\put(20,20){\usebox{\spot}}
\put(20,10){\usebox{\spot}}
\put(10,10){\usebox{\spot}}
\put(60,30){\line(-1,-1){20}}
\put(50,20){\line(0,-1){10}}
\put(40,20){\line(0,-1){10}}
\put(50,20){\line(-1,0){10}}
\put(50,10){\line(-1,0){10}}
\put(40,20){\line(1,-1){10}}
\put(40,20){\usebox{\spot}}
\put(50,20){\usebox{\spot}}
\put(50,10){\usebox{\spot}}
\put(40,10){\usebox{\spot}}
\put(90,30){\line(-1,-1){20}}
\put(80,20){\line(0,-1){10}}
\put(70,20){\line(0,-1){10}}
\put(80,20){\line(-1,0){10}}
\put(80,10){\line(-1,0){10}}
\put(70,20){\line(1,-1){10}}
\put(70,20){\usebox{\spot}}
\put(80,20){\usebox{\spot}}
\put(80,10){\usebox{\spot}}
\put(70,10){\usebox{\spot}}
\put(30,50){\line(-1,0){10}}
\put(20,50){\usebox{\spot}}
\put(50,30){\line(0,1){10}}
\put(50,40){\usebox{\spot}}
\put(50,60){\line(0,-1){10}}
\put(50,50){\usebox{\spot}}
\put(60,40){\line(1,0){10}}
\put(70,40){\usebox{\spot}}
\put(60,50){\line(1,0){10}}
\put(70,50){\usebox{\spot}}
\put(80,30){\line(0,1){10}}
\put(80,40){\usebox{\spot}}
\put(60,20){\line(0,1){50}}
\put(60,20){\usebox{\spot}}
\put(60,70){\usebox{\spot}}
\put(80,80){\usebox{\spot}}
\put(90,80){\usebox{\spot}}
\put(80,70){\usebox{\spot}}
\put(90,70){\usebox{\spot}}
\put(90,60){\usebox{\spot}}
\put(80,80){\line(1,0){10}}
\put(80,70){\line(1,0){10}}
\put(80,80){\line(0,-1){10}}
\put(90,80){\line(0,-1){20}}
\end{picture}

\caption{Proof of Lemma~\ref{lemma:graphdigraph}. Example of $G$ with one unary and one binary relation
(depicted by circles and arrows, respectively), and corresponding~$G'$.
The small connected component in the top right corner consists of the vertices $d,d_0,d_1,d_2,d_3$
and is recognisable as the only chordless 4-cycle in $G'$.
As there are no nullary relation symbols in the signature, it serves no real purpose in this particular example.}
\end{figure}
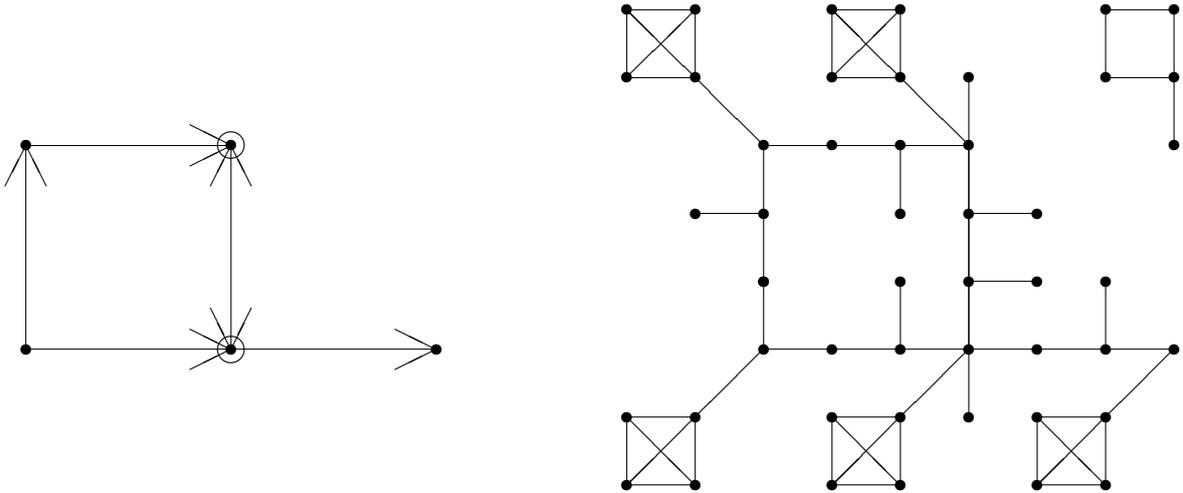

All vertices of $G$ are also vertices of $G'$.
Moreover, for every vertex $a$ of $G$ there are also four new vertices $a_0,a_1,a_2,a_3$ of $G'$ as well
as edges $(a,a_0)$, $(a_0,a_1)$, $(a_0,a_2)$, $(a_0,a_3)$, $(a_1,a_2)$, $(a_1,a_3)$, $(a_2,a_3)\in E^{G'}$. The 4-cliques will allow us to pick out the vertices of $G$ inside $G'$
by means of a first-order formula, since there will be no other 4-cliques in $G'$ and the 4-clique vertices will have no further
connections.
For every vertex $a\in P_i^G$ we add to $G'$ new vertices $c_1,\ldots,c_i$ and the path
$(a,c_1)$, $(c_1,c_2)$, $\ldots$, $(c_{i-1},c_i)\in E^{G'}$.

For every unordered pair $\{a,b\}$ of vertices from $G$ (we allow $a=b$) such that $(a,b)$ or $(b,a)$ appears in one of the binary relations of $G$,
$G'$ contains new vertices $c_{ba}$ and $c_{ab}$ as well as edges $(a,c_{ba})$, $(c_{ba}$, $c_{ab})$, $(c_{ab},b)\in E^{G'}$.
I.e.{} any two vertices between which there is a directed edge are connected by an undirected path of length~3.
For every directed edge $(a,b)\in R_i^G$ we add to $G'$ new vertices $c_1,\ldots,c_i$ and the path
$(c_{ab},c_1)$, $(c_1,c_2),\ldots,(c_{i-1},c_i)\in E^{G'}$.

Finally, to treat the nullary relations we also add new points $d, d_0, d_1, d_2, d_3$ and edges
$(d,d_0), (d_0,d_1), (d_1,d_2), (d_2,d_3), (d_3, d_1)\in E^{G'}$.
For each $i$ such that $G\models A_i$ we attach a new path $c_1,c_2,\ldots,c_i$ to $d$,
similar to the unary and binary cases.

We get $\mathcal C'$ from $\mathcal C$ by treating each $G\in\mathcal C$ in this way.
It is easy to see that $\mathcal C$ can be interpreted in $\mathcal C'$ and that $\mathcal C'$ is superflat if and only
if $\underline{\mathcal C}$ is superflat.
\end{proof}

\begin{theorem}\label{theorem:superflat}
Let  $\mathcal C$ be a class of coloured digraphs of a fixed signature.
If  $\underline{\mathcal C}$ is superflat, then $\mathcal C$ is stable.
\end{theorem}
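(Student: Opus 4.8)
The plan is to assemble Lemmas~\ref{lemma:graphdigraph} and~\ref{lemma:superflat} together with the remark that interpretations preserve stability, and then to remove the countability restriction in Lemma~\ref{lemma:graphdigraph} by a finite-character argument on signatures.

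First I would treat the case where the signature of $\mathcal C$ is countable. Here Lemma~\ref{lemma:graphdigraph} gives a class $\mathcal C'$ of undirected graphs in which $\mathcal C$ is interpretable and with $\mathcal C'$ superflat if and only if $\underline{\mathcal C}$ is superflat. Since $\underline{\mathcal C}$ is assumed superflat, $\mathcal C'$ is superflat, hence stable by Lemma~\ref{lemma:superflat}; and as $\mathcal C$ is interpretable in the stable class $\mathcal C'$, the remark on interpretations yields that $\mathcal C$ is stable. This step is routine bookkeeping with no real obstacle.

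The main point is the reduction from an arbitrary, possibly uncountable, signature to a countable one. Here I would use that the order property has finite character: if $\mathcal C$ were unstable, then some single formula $\phi(\bar x,\bar y)$ would have the order property with respect to $\mathcal C$, and $\phi$ mentions only finitely many relation symbols, forming a finite sub-signature $\sigma_0$. Let $\mathcal C_0$ be the class of reducts of members of $\mathcal C$ to $\sigma_0$. For each $M\in\mathcal C$ the Gaifman graph of the reduct $M_0$ is a spanning subgraph of $\underline M$, since dropping relations can only delete Gaifman edges; hence every member of $\underline{\mathcal C_0}$ is a subgraph of a member of $\underline{\mathcal C}$. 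Since superflatness, equivalently nowhere density by Remark~\ref{remark:nwdsuperflat}, is inherited by every subclass of the class of subgraphs of a superflat class, $\underline{\mathcal C_0}$ is superflat; as $\sigma_0$ is finite and hence countable, the first case applies and $\mathcal C_0$ is stable. But $\phi$ witnesses the order property with respect to $\mathcal C_0$ exactly as it does with respect to $\mathcal C$, a contradiction; so $\mathcal C$ is stable.

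I expect the only delicate part of the argument to be this last step, and within it the observation that passing to a reduct only shrinks the Gaifman graph, so that superflatness of $\underline{\mathcal C}$ indeed transfers to $\underline{\mathcal C_0}$; everything else is a direct combination of the results already established.
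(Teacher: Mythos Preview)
Your proposal is correct and follows essentially the same route as the paper: reduce to a finite (hence countable) signature using that any putative unstable formula involves only finitely many relation symbols, then apply Lemma~\ref{lemma:graphdigraph} and Lemma~\ref{lemma:superflat} together with preservation of stability under interpretations. The paper compresses your second step into the single sentence ``since every formula contains only a finite part of the signature, we may assume that the signature is finite,'' whereas you spell out explicitly why superflatness of $\underline{\mathcal C}$ passes to $\underline{\mathcal C_0}$; but the underlying argument is the same.
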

\begin{proof}
Since every formula contains only a finite part of the signature, we may assume that the signature is finite.
By Lemma~\ref{lemma:graphdigraph} we can interpret $\mathcal C$ in a superflat class $\mathcal C'$ of graphs.
By Lemma~\ref{lemma:superflat}, $\mathcal C'$ is stable.
It follows that $\mathcal C$ is also stable.
\end{proof}

\section{Independence property}

A first-order formula $\phi(\bar x,\bar y)$ is said to have the \emph{independence property} with respect to~$\mathcal C$
if it has the \emph{$n$-independence property} for all $n$, i.e.\ if for every $n$
there exist a structure $M\in\mathcal C$ and tuples $\bar a_0,\ldots,\bar a_{n-1}\in M$ and
$\bar b_J\in M$ for all $J\subseteq\{0,1,\ldots,n-1\}$ such that $M\models\phi(\bar a_i,\bar b_J)$ holds if and only if $i\in J$.
$\mathcal C$ is said to be \emph{dependent} or to have \emph{NIP}
 if no formula has the independence property with respect to~$\mathcal C$.
One can show that $\phi(\bar u,\bar v)$ has the independence property
if and only if the `opposite' formula $\phi(\bar v,\bar u)$ (i.e.\ really the same formula, but listing the variables differently) has it.
It is easy to see that every formula with the independence property has the order property. Therefore every stable class is dependent.
See \cite{nipintro} for more on the independence property and its relation to the order property.

Like stability, the independence property comes from stability theory~\cite{shelah90,math/9608205}, and is originally only defined
for first-order theories.
Again, a single structure $M$ is called dependent if $\{M\}$ is dependent or,
equivalently, if the class of all structures elementarily
equivalent to $M$ is dependent.

\begin{lemma}\label{lemma:NIPsuperflat}
Let $\mathcal C$ be a subgraph-closed class of graphs.
If $\mathcal C$ is dependent, then $\mathcal C$ is superflat.
\end{lemma}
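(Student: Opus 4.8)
The plan is to prove the contrapositive: if $\mathcal{C}$ is not superflat, then some fixed first-order formula has the independence property with respect to $\mathcal{C}$, so $\mathcal{C}$ is not dependent.

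Assume $\mathcal{C}$ is not superflat. By definition there is an $r$ such that for every $m$ the graph $K_m^r$ is a subgraph of some member of $\mathcal{C}$; since $\mathcal{C}$ is subgraph-closed, $K_m^r\in\mathcal{C}$ for all $m$. From these subdivided cliques I want to extract, for each $n$, a witness to the $n$-independence property of one formula. Using $K_m^r$ itself will not do, because in $K_m^r$ any two of the original (``principal'') vertices are joined by a path, so a distance formula would relate every pair and could not encode an arbitrary incidence pattern; the point is that subgraph-closedness lets us first thin $K_m^r$ out to exactly the sparse incidence configuration we need.

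Fix $n$ and let $B_n$ be the bipartite graph with left vertices $a_0,\dots,a_{n-1}$, right vertices $b_J$ for $J\subseteq\{0,\dots,n-1\}$, and an edge $a_ib_J$ precisely when $i\in J$; this is a subgraph of $K_m$ for $m=n+2^n$. Let $B_n^r$ be $B_n$ with every edge subdivided $r$ times. Then $B_n^r$ is isomorphic to a subgraph of $K_m^r$: keep the principal vertices together with the $r$ interior vertices of the subdivided paths of $K_m^r$ corresponding to the edges of $B_n$, and delete everything else. Hence $B_n^r\in\mathcal{C}$ by subgraph-closedness. Let $\phi_r(x,y)$ be the formula asserting that $x$ and $y$ are at distance exactly $r+1$; for fixed $r$ this is first-order and does not depend on $n$, and for $r=0$ it is just the edge relation.

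It then remains to check that $B_n^r\models\phi_r(a_i,b_J)$ if and only if $i\in J$. This follows from the identity $d_{B_n^r}(u,v)=(r+1)\,d_{B_n}(u,v)$ for principal vertices $u,v$ (with the convention $\infty$ in different components), which holds because in a subdivided graph every walk between two distinct principal vertices decomposes into traversals of whole subdivided paths, each of length $r+1$; thus $d_{B_n^r}(a_i,b_J)=r+1$ exactly when $a_ib_J$ is an edge of $B_n$, i.e.\ when $i\in J$. Consequently $\phi_r$ has the $n$-independence property for every $n$, hence the independence property with respect to $\mathcal{C}$, contradicting dependence. The only real work is this last distance computation and the verification that $B_n^r$ embeds into $K_m^r$, both routine once one tracks the interior vertices of the subdivided edges; the single idea needed is to pass to a subgraph before applying a distance formula.
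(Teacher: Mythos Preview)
Your proof is correct and essentially identical to the paper's: both pass to the subgraph of $K_{n+2^n}^r$ consisting of principal vertices $a_0,\dots,a_{n-1}$, $b_J$ and only those subdivided edges joining $a_i$ to $b_J$ with $i\in J$, and both use a path/distance formula of length $r+1$ to witness the independence property. The only cosmetic difference is that the paper phrases $\phi$ as ``there is a path of length $r+1$'' while you use ``distance exactly $r+1$''; in the graph $B_n^r$ these coincide, and your distance identity $d_{B_n^r}(u,v)=(r+1)\,d_{B_n}(u,v)$ makes the verification slightly more explicit than the paper's.
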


\begin{proof}
Suppose $\mathcal C$ is not superflat, i.e.\ for some $r$, every $K_m^r$
occurs as a subgraph of a member of $\mathcal C$.
Since the following graph $A_m$ is a subgraph of $K_{m+2^m}^r$,
it also occurs as a subgraph of a member of $\mathcal C$,
hence is itself a member of $\mathcal C$ (up to isomorphism).
$A_m$ has vertices $a_0,a_1,\ldots,a_{m-1}$ and $b_J$ for each $J\subseteq\{0,1,\ldots,m-1\}$
as well as additional vertices that appear in the following.
For any $i\in\{0,1,\ldots,m-1\}$ and any $J\subseteq\{0,1,\ldots,m-1\}$
such that $i\in J$, there is a path of length $r+1$ from $a_i$ to $b_J$.
The interior parts of these paths are pairwise disjoint and disjoint from the set
of vertices $a_i$ and $b_J$. There are no further vertices or edges.

Let $\phi(x,y)$ be the formula that says that there is a path of length $r+1$ from $x$ to $y$.
Since $A_m\models\phi(a_i,b_J)$ if and only if $i\in J$,
the family of graphs $A_m$ witnesses that $\phi(x,y)$ has the independence property with respect to $\mathcal C$.
So $\mathcal C$ is not dependent.
\end{proof}

We will call a class $\mathcal C$ of relational structures \emph{monotone} if whenever $M\to N$ is an injective homomorphism
and $N\in\mathcal C$, we also have $M\in\mathcal C$. In other words, a monotone class is closed under isomorphism
and `non-induced substructures', the natural generalisation of  non-induced subgraphs to arbitrary signatures.
Putting all the previous results together, we have the following theorem.

\begin{theorem}
Let  $\mathcal C$ be a monotone class of coloured digraphs of a fixed finite signature.
The following conditions are equivalent.
\begin{enumerate}
\item $\underline{\mathcal C}$ is nowhere dense.
\item $\underline{\mathcal C}$ is superflat.
\item $\mathcal C$ is stable.
\item $\underline{\mathcal C}$ is stable.
\item $\mathcal C$ is dependent.
\item $\underline{\mathcal C}$ is dependent.
\end{enumerate}
\end{theorem}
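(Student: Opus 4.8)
The plan is to route everything through condition~$(2)$. The equivalence $(1)\Leftrightarrow(2)$ is Remark~\ref{remark:nwdsuperflat} applied to the class of graphs $\underline{\mathcal C}$. The implication $(2)\Rightarrow(3)$ is precisely Theorem~\ref{theorem:superflat}, and $(2)\Rightarrow(4)$ follows from Lemma~\ref{lemma:superflat} applied to $\underline{\mathcal C}$ --- a graph equals its own Gaifman graph, so ``$\underline{\mathcal C}$ is stable'' is literally the conclusion of that lemma. Moreover $(3)\Rightarrow(5)$ and $(4)\Rightarrow(6)$ are instances of the observation that every stable class is dependent (a formula with the independence property has the order property). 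It therefore remains to prove $(5)\Rightarrow(6)$ and $(6)\Rightarrow(2)$; together with the above this yields the cycles $(2)\Rightarrow(3)\Rightarrow(5)\Rightarrow(6)\Rightarrow(2)$ and $(2)\Rightarrow(4)\Rightarrow(6)\Rightarrow(2)$, so all six conditions are equivalent.

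For $(5)\Rightarrow(6)$ I would use that $\underline{\mathcal C}$ is, in a trivial way, interpretable in $\mathcal C$. Since the signature is finite and all its relation symbols are at most binary, the edge relation of the Gaifman graph is defined inside every $M\in\mathcal C$ by the single first-order formula
\[
\eta(x,y)\;:=\;x\neq y\;\wedge\;\bigvee_{R}\bigl(R(x,y)\vee R(y,x)\bigr),
\]
where $R$ runs over the finitely many binary relation symbols. Replacing every occurrence of $E$ by $\eta$ turns a graph formula $\phi(\bar x,\bar y)$ into a formula $\phi^\eta(\bar x,\bar y)$ over the signature of $\mathcal C$ with $M\models\phi^\eta(\bar a,\bar b)$ if and only if $\underline M\models\phi(\bar a,\bar b)$. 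If $\phi$ had the independence property with respect to $\underline{\mathcal C}$ --- witnessed by some graph $\underline M$ with $M\in\mathcal C$ and suitable tuples --- then $\phi^\eta$ would have it with respect to $\mathcal C$, witnessed by $M$ and the same tuples. Contrapositively, $(5)$ implies $(6)$. (This is the same bookkeeping as in the remark on interpretations and stability, now run for the independence property instead.)

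The implication $(6)\Rightarrow(2)$ is where the real work lies, and I expect it to be the main obstacle. By Lemma~\ref{lemma:NIPsuperflat} it suffices to show that $\underline{\mathcal C}$ is a subgraph-closed class of graphs; then $(6)$ forces $\underline{\mathcal C}$ to be superflat, which is $(2)$. So the point is to transfer monotonicity of $\mathcal C$ to $\underline{\mathcal C}$. Given a not necessarily induced subgraph $H\subseteq\underline N$ with $N\in\mathcal C$, I would build $M\in\mathcal C$ with $\underline M=H$ by \emph{trimming} $N$: let the universe of $M$ be $V(H)$, keep the nullary relations of $N$, restrict each unary relation of $N$ to $V(H)$, and for each binary symbol $R$ let $R^M$ consist of exactly those pairs $(a,b)\in R^N$ with $a,b\in V(H)$ such that $a=b$ or $\{a,b\}\in E^H$. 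The inclusion $V(H)\hookrightarrow V(N)$ is then an injective homomorphism $M\to N$, so $M\in\mathcal C$ by monotonicity. One checks $\underline M\subseteq H$ because every Gaifman edge of $M$ comes from a surviving binary pair and is hence an edge of $H$; and $\underline M\supseteq H$ because an edge $\{a,b\}$ of $H$ is an edge of $\underline N$, hence --- since only binary symbols contribute to a Gaifman graph --- witnessed by some $(a,b)\in R^N$ or $(b,a)\in R^N$, a pair that survives the trimming. So $H=\underline M\in\underline{\mathcal C}$, and $\underline{\mathcal C}$ is subgraph-closed. The one place where this genuinely uses that $\mathcal C$ consists of coloured digraphs rather than arbitrary relational structures is that a single Gaifman edge is always witnessed by a \emph{binary} relation instance; for higher arities this trimming would have to be redesigned, so the at-most-binary hypothesis is essential here.
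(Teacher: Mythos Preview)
Your proof is correct and follows essentially the same route as the paper's: the same cycle of implications via Remark~\ref{remark:nwdsuperflat}, Theorem~\ref{theorem:superflat}, the interpretation of $\underline{\mathcal C}$ in $\mathcal C$, the fact that stable classes are dependent, and Lemma~\ref{lemma:NIPsuperflat} applied once $\underline{\mathcal C}$ is known to be subgraph-closed. You supply more detail than the paper does---notably the explicit trimming construction showing that monotonicity of $\mathcal C$ forces $\underline{\mathcal C}$ to be subgraph-closed, which the paper simply asserts---and you reach $(4)$ from $(2)$ directly via Lemma~\ref{lemma:superflat} rather than from $(3)$ via interpretation, but these are cosmetic variations on the same argument.
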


\begin{proof}
The first two conditions are equivalent by Remark~\ref{remark:nwdsuperflat} and imply the third
by Theorem~\ref{theorem:superflat}.
$3\Rightarrow5$ and $4\Rightarrow6$ because every stable formula is dependent.
$3\Rightarrow4$ and $5\Rightarrow6$ because $\underline{\mathcal C}$ is interpretable in $\mathcal C$.
Finally, $6\Rightarrow1$ by Lemma~\ref{lemma:NIPsuperflat} because
$\underline{\mathcal C}$ is closed under subgraphs.
\end{proof}

As a corollary to the proof, we see that to check stability or dependence of a monotone class of coloured digraphs
it is sufficient to look at formulas of the form $\phi(x,y)$ with single variables $x$ and $y$.
This is not true in general.

The condition that $\mathcal C$ be monotone is crucial. The class of all cliques is stable
but not nowhere dense / superflat. It is also not hard to code the class of linear orders in a class
of graphs that is dependent but not stable.
Further it is crucial for the equivalence of stability and NIP that $\mathcal C$ is a class of relational structures
(i.e.{} the signature does not contain function symbols), since
the class of all linear orders coded by the binary function $\min$ is dependent but not stable.
Also note that the underlying graph of a structure with a binary function symbol is always complete.
Finally, in a signature with infinitely many binary relation symbols $\underline{\mathcal C}$ may
not be interpretable in $\mathcal C$, and in fact a monotone class $\mathcal C$ of such a signature
may be stable even though $\underline{\mathcal C}$ is not.

\section{Excluded topological minors and strong stability}

Except for the complication that comes from the lack of a superstability notion for general classes
of graphs, this section is essentially a corollary of the previous one and of a follow-up to the paper
by Podewski and Ziegler.

An obvious way of strengthening nowhere density is to require that for some~$m$,
the $m$-clique $K_m$ is not a minor at all or, equivalently,
that there be some finite graph $G$ which does not occur as a minor of a graph in $\mathcal C$.
While the definition of nowhere density stays the same if we add `topological', this is not
true for this strengthening.
As the class of all graphs of maximum degree three shows, a class can omit a finite graph as
topological minor (in this case the 4-star) but still have all finite graphs as minors.

Following Herre, Mekler and Smith~\cite{ssg}, we call a class $\mathcal C$ of graphs \emph{ultraflat}
if for some~$m$, $K_m$ is not a topological minor of any member of~$\mathcal C$.\footnote{Like
Podewski and Ziegler, these authors only considered a single, infinite graph.}

\begin{remark}
 A class of graphs is ultraflat if and only if it omits a finite topological minor.
\end{remark}

\begin{fact}[Herre, Mekler, Smith \cite{ssg}]\label{fact:ultraflat}
 Every ultraflat graph is superstable.
\end{fact}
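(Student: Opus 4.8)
The plan is to argue by contraposition: I would show that a graph $G$ that is not superstable fails to be ultraflat, i.e.\ that for every $m$ it contains a subdivision of $K_m$. Since containment of a fixed finite subdivided clique is expressed by a single existential first-order sentence, it suffices to exhibit such a subdivision, for each $m$, in some graph elementarily equivalent to $G$; in particular I may pass freely to a highly saturated elementary extension $G^*$, which is again not superstable. I would first note that ultraflatness implies superflatness --- the bound $m$ excluding every subdivision of $K_m$ also excludes every $K_m^r$ --- so by Fact~\ref{fact:superflat} (and the amplification in Lemma~\ref{lemma:superflat}) stability of $G$ is already in hand; only the refinement from stability to superstability remains, and it is precisely here that the single uniform bound in ultraflatness, as opposed to the family of bounds $m_r$ for superflatness, has to be exploited.

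Next I would use the combinatorial description of non-superstability in a stable theory: some type over a countable set forks over each of its finite subsets, equivalently there is an infinite tree $(\bar a_s)_{s\in\omega^{<\omega}}$ of parameter tuples together with formulas $\phi_n(x,\bar y_n)$ such that along each branch $\eta$ the set $\{\phi_n(x,\bar a_{\eta\restriction(n+1)}):n<\omega\}$ is consistent while any two distinct siblings yield mutually exclusive instances of the corresponding $\phi_n$. Since a stable theory admits no single formula --- and indeed no finite family of formulas --- of infinite forking rank, infinitely many genuinely different $\phi_n$ must occur, and by the structure theory for definable sets in superflat graphs developed by Podewski and Ziegler and in the follow-up to \cite{sg}, each $\phi_n(x,\bar a_s)$ is, up to Boolean combination, governed by the distances from $x$ to the coordinates of $\bar a_s$; hence arbitrarily large distance scales appear. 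Truncating the tree to finite height and width and running the finite Ramsey theorem on the parameters, one extracts a highly homogeneous finite configuration --- living in $G^*$, hence, being finite, in $G$ --- in which the branches realise long internally disjoint paths and the branching at each level links these paths pairwise, giving a subdivision of $K_m$ for arbitrarily large $m$. This is the same extraction by which the order property forces large $K_m^r$ in the proof that superflat graphs are stable, but the unbounded family of scales produced by non-superstability is exactly what overruns the single ultraflatness bound.

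The hard part will be this last step: converting the tree-shaped model-theoretic witness of non-superstability into a genuinely clique-like --- rather than merely tree- or spider-like --- subdivision of $K_m$. A forking chain only asserts that each new parameter interacts with the realising element in a way not forced by the earlier parameters, and one must see, through the finer analysis of forking in superflat graphs, that these interactions compound into pairwise-linked paths rather than a plain subdivided tree; controlling which of these paths can be chosen internally disjoint is the delicate bookkeeping. This is essentially what Herre, Mekler and Smith carry out in \cite{ssg}, refining the rank computation of Podewski and Ziegler in \cite{sg}. A minor technical convenience, just as for stability and dependence in the monotone-class setting above, is that after replacing $G$ by the class of its subgraphs --- still ultraflat, and now monotone --- one may restrict throughout to formulas $\phi(x,y)$ in two single variables.
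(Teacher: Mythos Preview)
The paper does not give a proof of this statement: it is recorded as a \emph{Fact} with a citation to Herre, Mekler and Smith~\cite{ssg} and is used as a black box, in the same way that Fact~\ref{fact:superflat} quotes the Podewski--Ziegler result. There is therefore nothing in the paper to compare your proposal against; what the paper ``does'' here is simply cite~\cite{ssg}.

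Read as a sketch of the argument in~\cite{ssg}, your outline is in the right spirit --- reduce to the stable-but-not-superstable case, unpack non-superstability via an infinite forking configuration, and use the fine structure of definable sets in flat graphs to manufacture arbitrarily large subdivided cliques --- and you correctly flag that the genuine difficulty is the last extraction step. One logical wrinkle to straighten out: your opening paragraph is phrased as if ultraflatness is being \emph{assumed} (``stability of $G$ is already in hand''), but in the contrapositive you start only from ``$G$ is not superstable''. The intended reduction is presumably a case split: if $G$ is unstable then by (the contrapositive of) Fact~\ref{fact:superflat} it is not superflat, hence not ultraflat, and you are done; so one may assume $G$ is stable but not superstable. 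Beyond that, since you explicitly defer the hard step to~\cite{ssg}, what you have written is less a proof than an annotated pointer to one --- which, as it happens, is exactly what the paper provides too.
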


\begin{corollary}
Let $G$ be a coloured digraph. If $\underline G$ is ultraflat, then $G$ is superstable.
\end{corollary}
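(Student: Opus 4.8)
The plan is to imitate the proof of Theorem~\ref{theorem:superflat} almost verbatim, replacing ``superflat'' by ``ultraflat'' and ``stable'' by ``superstable'' throughout and appealing to Fact~\ref{fact:ultraflat} in place of Lemma~\ref{lemma:superflat} and Fact~\ref{fact:superflat}. The structural simplification --- which is also exactly why the statement is confined to a single graph $G$ --- is that there is no analogue of the ``sum of a class'' construction used in Lemma~\ref{lemma:superflat}: that step collapsed a whole class into one graph in order to apply Fact~\ref{fact:superflat}, but the corresponding move does not preserve superstability, since an infinite disjoint union of superstable graphs need not be superstable. Here $G$ is already a single structure, so we work with it directly.

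First I would reduce to a countable signature. Unlike stability, superstability is not determined by the finite reducts, but it is determined by the countable ones (non-superstability is witnessed either by a single unstable formula or by an infinite forking chain over a countable parameter set, and in either case only countably many relation symbols occur). The Gaifman graph of any reduct of $G$ is a spanning subgraph of $\underline G$, hence ultraflat whenever $\underline G$ is; so it suffices to prove the corollary for countable signatures.

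Next, I would apply the construction from the proof of Lemma~\ref{lemma:graphdigraph} to the single structure $G$, obtaining an undirected graph $G'$ in which $G$ is interpretable by a one-dimensional interpretation whose coordinate map is a bijection; thus $G$ is, up to isomorphism, the structure induced by $G'$ on the $\emptyset$-definable set of ``old'' vertices, with $\emptyset$-definable relations. The point that needs checking is that the same reasoning behind ``$G'$ is superflat iff $\underline G$ is superflat'' also gives ``$G'$ is ultraflat iff $\underline G$ is ultraflat''. One direction is immediate, since $G'$ contains a subdivision of $\underline G$ as a subgraph: realise each edge $\{a,b\}$ of $\underline G$ by the length-$3$ connector $a\,c_{ba}\,c_{ab}\,b$, and note that the interiors of these connectors are pairwise disjoint. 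For the converse, one checks that the only vertices of $G'$ that are not vertices of $G$ but can occur inside a subdivision of $K_m$ for $m\geq 6$ are the connector vertices $c_{ba},c_{ab}$: the clique gadgets $a_0,a_1,a_2,a_3$ have degree $4$ and cannot be traversed by a simple path (the only edge leaving such a gadget is $(a,a_0)$), while the colour and subdivision paths and the gadget around $d$ are pendant dead-ends. Hence such a subdivision, read off on its $G$-vertices with connectors standing for edges of $\underline G$, is again a subdivision of $K_m$; so if $\underline G$ omits $K_m$ as a topological minor then $G'$ omits $K_{\max(m,6)}$.

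Finally, assuming $\underline G$ is ultraflat, $G'$ is ultraflat and therefore superstable by Fact~\ref{fact:ultraflat}. Since $G$ is interpretable in $G'$ and interpretations transfer superstability downwards just as they transfer stability --- the structure induced on a $\emptyset$-definable subset of a superstable structure is again superstable --- it follows that $G$ is superstable. The only genuinely new work is the degree-and-pendant bookkeeping for $G'$ in the previous paragraph, which is routine; the real obstacle, a class version of the corollary, is the one the paper flags and deliberately leaves open, because the disjoint-union device that serves stability fails for superstability.
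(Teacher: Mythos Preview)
Your proof is correct and follows exactly the paper's approach: apply the construction of Lemma~\ref{lemma:graphdigraph} to $G$, observe that $G'$ is ultraflat if and only if $\underline G$ is, conclude that $G'$ is superstable by Fact~\ref{fact:ultraflat}, and transfer superstability down to $G$ via the interpretation. You supply considerably more detail than the paper's three-line argument---notably the reduction to a countable signature and the explicit check that ultraflatness passes to $G'$---both of which the paper leaves implicit.
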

\begin{proof}
We apply the construction in the proof of Lemma~\ref{lemma:graphdigraph} to $G$
and observe that $G'$ is ultraflat if and only if $\underline G$ is ultraflat.
It follows that $G'$ is superstable. Since $G$ is interpretable in $G'$,
$G'$ is also superstable.
\end{proof}

We cannot use this result directly because we do not know a generalisation of superstability
that is appropriate for general classes of structures. 
However, the more recent notions of strongly dependent and strongly stable theories are easily generalised.
Essentially\footnote{We have to adapt the definition to fit it into the present context,
in the same way that we did earlier for the order and independence properties.} following Shelah~\cite{modest},
we call a class $\mathcal C$ of structures \emph{not strongly dependent} if there is an infinite
sequence of first-order formulas $\phi^0(\bar x,\bar y^0), \phi^1(\bar x,\bar y^1), \phi^2(\bar x,\bar y^2), \ldots$
in the signature of $\mathcal C$, such that for every $m$ there exist a structure $M\in\mathcal C$
and tuples $\bar b^i_j\in M$, where each $\bar b^i_j$ has the same length as $\bar y^i$,
as well as for every function $f\colon \{0,1,\ldots,m-1\}\to\{0,1,\ldots,m-1\}$
a tuple $\bar a_f\in M$ such that $M\models\phi^i(\bar a_f,\bar b^i_j)$ if and only if $j=f(i)$.
In other words, in the array
\[\begin{array}{ccccc}
\phi^0(\bar x,\bar b^0_0), & \phi^0(\bar x,\bar b^0_1), & \phi^0(\bar x,\bar b^0_2), & \ldots, & \phi^0(\bar x,\bar b^0_{m-1})\\
\phi^1(\bar x,\bar b^1_0), & \phi^1(\bar x,\bar b^1_1), & \phi^1(\bar x,\bar b^1_2), & \ldots, & \phi^1(\bar x,\bar b^1_{m-1})\\
\vdots & \vdots & \vdots & \ddots & \vdots\\
\phi^{m-1}(\bar x,\bar b^{m-1}_0), & \phi^{m-1}(\bar x,\bar b^{m-1}_1), & \phi^{m-1}(\bar x,\bar b^{m-1}_2), & \ldots, & \phi^{m-1}(\bar x,\bar b^{m-1}_{m-1}),
\end{array}\]
if we pick one formula from each row, there is always a tuple $\bar a\in M$ that makes precisely the chosen formulas true.
Since a formula $\phi(\bar x,\bar y)$ has the independence property for $\mathcal C$ if and only if
the constant sequence $\phi(\bar x,\bar y), \phi(\bar x,\bar y), \phi(\bar x,\bar y), \ldots$ witnesses that
$\mathcal C$ is not strongly dependent, every strongly dependent class is in fact dependent.
We call a class of structures \emph{strongly stable} if it is stable and strongly dependent.\footnote{Strongly
dependent first-order theories turn out to be those in which every type has finite weight, but it is not clear whether
anything like this can be expressed in our more general context.}

\begin{remark}
If $\mathcal C$ is interpretable in $\mathcal D$ and $\mathcal D$ is strongly stable or strongly dependent,
then so is $\mathcal C$.
\end{remark}

\begin{fact}\label{fact:superstable}
Every superstable structure is strongly stable.
\end{fact}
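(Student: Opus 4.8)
The plan is to reduce Fact~\ref{fact:superstable} to the corresponding statement for complete first-order theories, where it is part of the standard hierarchy of stability-theoretic dividing lines. First I would observe that, by definition, a structure $M$ is superstable (respectively strongly stable) if and only if $\{M\}$ is, and that — exactly as was pointed out earlier for stability and dependence — this is equivalent to the class of all structures elementarily equivalent to $M$, i.e.\ the class of models of $\operatorname{Th}(M)$, being superstable (respectively strongly stable). So it suffices to show: every superstable complete theory $T$ is strongly stable, meaning $T$ is stable (which is immediate, since superstable implies stable) and $T$ is strongly dependent in the sense defined above.

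The one genuine point to check is that the combinatorial ``not strongly dependent'' configuration spelled out in the array — choosing one formula $\phi^i(\bar x,\bar b^i_j)$ from each of finitely many rows and finding a single $\bar a$ realising exactly the chosen instances, for every $m$ — is, for a complete theory $T$, equivalent to the usual model-theoretic definition of not being strongly dependent: the existence of an infinite sequence of formulas $\phi^i(\bar x,\bar y^i)$ and parameters $(\bar b^i_j)_{i<\omega,\,j<\omega}$ in a (monster) model such that the partial type $\{\phi^i(\bar x,\bar b^i_{f(i)})\wedge\neg\phi^i(\bar x,\bar b^i_j):i<\omega,\,j\ne f(i)\}$ is consistent for every $f\colon\omega\to\omega$. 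This is a routine compactness argument: finite satisfiability of all these types over a monster model of $T$ is exactly the statement that for every finite $m$ one finds $M\models T$ and the required finite array, and conversely finite arrays in arbitrary models of $T$ embed (elementarily) into the monster. Having identified the two notions, strong dependence of superstable theories is then the classical fact that superstable theories are strongly dependent (every type has finite weight, in particular finite dp-rank, so no such infinite sequence of formulas exists); I would simply cite Shelah~\cite{modest} for this, since the excerpt already invokes that reference for the definition.

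The main obstacle, such as it is, is bookkeeping rather than mathematics: one must be careful that the ``for every $m$'' quantifier in the definition adopted here genuinely matches the infinite-sequence formulation — in particular that we are allowed to use a \emph{fixed} infinite sequence of formulas $(\phi^i)_{i<\omega}$ and only vary $M$ and the parameters with $m$, which is what makes the compactness translation go through cleanly. A secondary point is that the definition of ``not strongly dependent'' used in the excerpt only demands $M\models\phi^i(\bar a_f,\bar b^i_j)\iff j=f(i)$ for $j<m$, i.e.\ it controls the row via an equality of indices; one should note this is equivalent (after passing to the monster and using compactness, or by an easy reindexing) to the version controlling $\phi^i$ positively on the chosen column and negatively on all other columns, which is the form in which the superstability argument is usually phrased. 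Neither of these requires any calculation, so once the reduction to complete theories and the compactness equivalence are in place, the statement follows from Shelah's theory.
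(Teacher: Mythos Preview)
Your sketch is correct, but note that the paper does not actually prove this statement: it is labelled a \emph{Fact} and stated without proof, as a citation of the standard stability-theoretic hierarchy (implicitly from Shelah~\cite{modest}). So there is no ``paper's own proof'' to compare against; the authors simply take superstable $\Rightarrow$ strongly stable as known for complete theories and use it as a black box.

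What you have written is exactly the unpacking one would do to justify this black box in the paper's slightly nonstandard framework. Your two observations are the right ones: first, that for a single structure $M$ the class-based definition of strong dependence given here agrees, via compactness, with Shelah's original definition for $\operatorname{Th}(M)$; second, that once this identification is made, the implication is the classical one. The only mild caution is that the paper explicitly says it does \emph{not} have a notion of superstability for general classes, so your parenthetical ``$M$ is superstable if and only if $\{M\}$ is'' should not be read as invoking a class-level definition of superstability --- superstability here is only ever the ordinary theory-level notion applied to $\operatorname{Th}(M)$, and the reduction step is really just for strong stability. With that wording adjusted, your argument is a faithful expansion of what the paper leaves implicit.
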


\begin{corollary}
Let  $\mathcal C$ be a class of coloured digraphs of a fixed signature.
If $\underline{\mathcal C}$ omits a finite topological minor, then $\mathcal C$ is strongly stable.
\end{corollary}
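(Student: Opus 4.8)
The plan is to compose the two constructions already used in the paper so as to land in a single ultraflat graph, and then to appeal to Facts~\ref{fact:ultraflat} and~\ref{fact:superstable}; the only point that needs attention is that ultraflatness, like superflatness, is preserved --- and uniformly --- by those constructions. As in the proof of Theorem~\ref{theorem:superflat} we may assume the signature is countable: a (necessarily countable) sequence of formulas witnessing that $\mathcal{C}$ is not strongly stable mentions only countably many relation symbols, and restricting $\mathcal{C}$ to that sub-signature only deletes edges from the underlying graphs, which therefore stay ultraflat. So let $\mathcal{C}$ have a countable signature and apply Lemma~\ref{lemma:graphdigraph}, interpreting $\mathcal{C}$ in a class $\mathcal{D}$ of undirected graphs. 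The construction in its proof has --- as observed just above for a single structure --- the property that the graph built from $G$ is ultraflat if and only if $\underline{G}$ is, and one checks that the relevant bound transforms uniformly over the class; hence $\mathcal{D}$ is ultraflat.

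Next I would run the construction from the proof of Lemma~\ref{lemma:superflat} on $\mathcal{D}$: from each $A \in \mathcal{D}$ form the graph $A'$ of diameter at most four obtained by subdividing every edge once and attaching, at one new vertex $t$ adjacent to all original vertices, the triangle on $t,t_1,t_2$; let $\mathcal{D}'$ be the class of these graphs and $G(\mathcal{D}')$ their disjoint union. Just as that proof checks that $\mathcal{D}'$ stays superflat, it stays ultraflat: subdividing edges does not change the topological minors of a graph, and adjoining the small gadget $t,t_1,t_2$ raises the least excluded clique size by at most six, since every topological $K_N$-minor of $A'$ restricts, after deleting at most six of its branch vertices (those lying in $\{t,t_1,t_2\}$, together with one endpoint of each of the at most three surviving paths that still pass through $\{t,t_1,t_2\}$), to a topological $K_{N-6}$-minor of the subdivision of $A$, which has the same topological minors as $A$. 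Since a topological $K_m$-minor with $m \ge 2$ is connected, it lies in a single component, so $G(\mathcal{D}')$ is ultraflat as well. By Fact~\ref{fact:ultraflat} the single graph $G(\mathcal{D}')$ is superstable, hence strongly stable by Fact~\ref{fact:superstable}.

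Finally I would push strong stability back down. Exactly as in the proof of Lemma~\ref{lemma:superflat}, every first-order formula $\phi(\bar x,\bar y)$ over $\mathcal{D}$ has a translation $\phi'(\bar x,\bar y)$ that is valid on the original vertices inside each component of $G(\mathcal{D}')$; applying this to an arbitrary sequence of formulas turns any array witnessing that $\mathcal{D}$ is not strongly stable into one witnessing the same for $G(\mathcal{D}')$, so $\mathcal{D}$ is strongly stable, and then so is $\mathcal{C}$, because $\mathcal{C}$ is interpretable in $\mathcal{D}$ and interpretations preserve strong stability (the remark above). The step I expect to cost the most care is the ultraflatness bookkeeping, in particular keeping the bounds uniform through the two constructions; and it is worth stressing why the detour through the bounded-diameter class $\mathcal{D}'$, rather than a plain disjoint union of the members of $\mathcal{D}$, is unavoidable: strong stability, unlike superstability of each single member, does not pass automatically from the members of a class to the class itself.
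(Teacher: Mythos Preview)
Your proposal is correct and follows essentially the same route as the paper: reduce to a countable signature, apply Lemma~\ref{lemma:graphdigraph} to pass to an ultraflat class of undirected graphs, then rerun the construction from the proof of Lemma~\ref{lemma:superflat} with Facts~\ref{fact:ultraflat} and~\ref{fact:superstable} replacing Fact~\ref{fact:superflat}, and finally pull strong stability back through the interpretation. The paper's own proof is terse and merely signals these patches; you have spelled them out, including the ultraflatness bookkeeping for the diameter-four gadget and the observation that a connected topological clique minor of the disjoint union lies in a single component, both of which the paper leaves implicit.
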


\begin{proof}
We will essentially repeat the proof of Theorem~\ref{theorem:superflat} with some patches.
We may assume that the signature is countable.
First note that $\mathcal C'$ as in Lemma~\ref{lemma:graphdigraph} is ultraflat provided $\mathcal C$ is ultraflat.
By a similarly straightforward adaptation of Lemma~\ref{lemma:superflat} (using Facts~\ref{fact:ultraflat} and~\ref{fact:superstable} instead of Fact~\ref{fact:superflat}),
$\mathcal C'$ is strongly stable.
Since $\mathcal C$ is interpretable in the strongly stable class $\mathcal C'$,
$\mathcal C$ is itself strongly stable.
\end{proof}

\section{Conclusion}

We have seen that tameness notions from combinatorial graph theory, finite model theory
and stability theory can be compared for classes of graphs, so long as they are closed under subgraphs.
The latter restriction is a rather natural one in the first two fields, but severe and unnatural from the 
point of view of stability theory, of which the observed collapse of stability and NIP may be a symptom.
This will probably make a transfer of ideas from stability theory to the other fields more straightforward than in
the other direction.
For some dividing lines of stability theory (superstability, simplicity) we have not found a
suitable generalisation.
It remains to be seen to what extent
parts of stability theory (indiscernibles, forking or splitting,
etc.) can be generalised
to the new context and whether they are of any relevance to the algorithmically
oriented fields.

Finally, we hope that bringing together the tools from the different fields will
make it easier to find a unifying combinatorial explanation for the algorithmically tame (or wild)
behaviour of many graph classes.

\bibliographystyle{hplain}
\bibliography{nwd}

\begin{thebibliography}{10}

\bibitem{nipintro}
Hans Adler.
\newblock Introduction to theories without the independence property.
\newblock {\em Arch. Math. Log.}
\newblock accepted.

\bibitem{Dawar07}
Anuj Dawar.
\newblock Finite model theory on tame classes of structures.
\newblock In Ludek Ku\v{c}era and Anton\'{\i}n Ku\v{c}era, editors, {\em MFCS},
  volume 4708 of {\em Lect. Notes Comput. Sc.}, pages 2--12. Springer, 2007.

\bibitem{Dawar10}
Anuj Dawar.
\newblock Homomorphism preservation on quasi-wide classes.
\newblock {\em J. Comput. Syst. Sci.}, 76(5):324--332, 2010.

\bibitem{DawarK09}
Anuj Dawar and Stephan Kreutzer.
\newblock Domination problems in nowhere-dense classes.
\newblock In Ravi Kannan and K.~Narayan Kumar, editors, {\em FSTTCS}, volume~4
  of {\em LIPIcs}, pages 157--168. Schloss Dagstuhl -- Leibniz-Zentrum f\"ur
  Informatik, 2009.

\bibitem{diestel}
Reinhard Diestel.
\newblock {\em {Graph Theory}}, volume 173 of {\em Grad. Texts in Math.}
\newblock Springer, 2005.

\bibitem{math/9608205}
Doug Ensley and Rami Grossberg.
\newblock Finite models, stability, and {R}amsey's theorem, 1996,
  arXiv:math/9608205.

\bibitem{ssg}
Heinrich Herre, Alan~H.{} Mekler, and Kenneth~W.{} Smith.
\newblock Superstable graphs.
\newblock {\em Fund. Math.}, 118:75--79, 1983.

\bibitem{hodges}
Wilfrid Hodges.
\newblock {\em A Shorter Model Theory}.
\newblock Cambridge University Press, 1997.

\bibitem{laskowski}
M.~Chris Laskowski.
\newblock Vapnik-{C}hervonenkis classes of definable sets.
\newblock {\em J. London Math. Soc. (2)}, 45:377--384, 1992.

\bibitem{sg}
Klaus-Peter Podewski and Martin Ziegler.
\newblock Stable graphs.
\newblock {\em Fund. Math.}, 100:101--107, 1978.

\bibitem{nwd}
Jaroslav~Ne\v set\v ril and Patrice~Ossona de~Mendez.
\newblock On nowhere dense graphs.
\newblock Submitted.

\bibitem{nwdjsl}
Jaroslav~Ne\v set\v ril and Patrice~Ossona de~Mendez.
\newblock First order properties on nowhere dense structures.
\newblock {\em J.~Symb. Logic}, 75:868--887, 2010.

\bibitem{shelah90}
Saharon Shelah.
\newblock {\em Classification Theory and the Number of Non-Isomorphic Models}.
\newblock Stud. Logic Found. Math. North-Holland, 2nd edition, 1990.

\bibitem{modest}
Saharon Shelah.
\newblock Classification theory for elementary classes with the dependence
  property -- a modest beginning.
\newblock {\em Sci. Math. Jpn.}, 59:265--316, 2000.

\bibitem{vc}
Vladimir~N. Vapnik and Alexey~Y. Chervonenkis.
\newblock On the uniform convergence of relative sequences of events to their
  probabilities.
\newblock {\em Theory Probab. Appl.}, 16:264--280, 1971.

\end{thebibliography}

\end{document}